\newcommand{\rus}[1]{\selectlanguage{russian}{\fontfamily{Tempora-TLF}\fontsize{9pt}{11pt}\selectfont #1}\selectlanguage{USenglish}}
\renewcommand*{\eqref}[1]{%
  \hyperref[{#1}]{\textup{\tagform@{\ref*{#1}}}}%
}
\def\inter{\mathop\mathrm{int}\nolimits}
\def\cl{\mathop\mathrm{cl}\nolimits}
\def\relint{\mathop\mathrm{relint}\nolimits}
\def\bd{\mathop\mathrm{bd}\nolimits}
\def\cc{\mathop\mathrm{cc}\nolimits}
\def\conv{\mathop\mathrm{conv}\nolimits}
\def\lin{\mathop\mathrm{lin}\nolimits}
\def\R{\mathbb{R}}
\theoremstyle{plain}
\newtheorem{theorem}{Theorem}
\newtheorem{lemma}[theorem]{Lemma}
\newtheorem{corollary}[theorem]{Corollary}
\theoremstyle{definition}
\newtheorem{example}[theorem]{Example}
\theoremstyle{remark}
\newtheorem{remark}[theorem]{Remark}
\newtheorem*{myproof}{Proof of \hyperref[thm:UniqueCircumGauge]{Theorem~\ref*{thm:UniqueCircumGauge}}}
\begin{document}

	
	\title[Uniqueness of circumcenters in generalized Minkowski spaces]{Uniqueness of circumcenters in generalized Minkowski spaces}
	
	\author[B. Gonz\'alez Merino]{Bernardo Gonz\'{a}lez Merino}
	\address{Departamento de Analisis Matemático, Facultad de Matem\'aticas, Universidad de Sevilla, Apdo. 1160, 41080-Sevilla, Spain}
	\email{bgonzalez4@us.es}

	\author[T. Jahn]{Thomas Jahn}
	\address{Faculty of Mathematics, University of Technology, 09107 Chemnitz, Germany}
	\email{thomas.jahn@mathematik.tu-chemnitz.de}
		
        \author[C. Richter]{Christian Richter}
        \address{Institute of Mathematics, Friedrich Schiller University, 07737 Jena, Germany}
        \email{christian.richter@uni-jena.de}
	
	\subjclass[2010]{Primary 52A20, Secondary 41A28, 41A52, 41A65, 52A21, 52A40}
	\keywords{circumball, circumcenter, circumradius, generalized Minkowski space, unit ball}
	
	\thanks{The first named author was partially supported by
		Fundaci\'{o}n S\'{e}neca, Programme in Support of Excellence Groups of the Regi\'{o}n de Murcia, Spain, project reference 19901/GERM/15, and by
MINECO project reference MTM2015-63699-P, Spain.}
	
	\date{\today}
	\begin{abstract}
	  In an $n$-dimensional normed space every bounded set has a unique circumball if and only if every set of cardinality two has a unique circumball and if and only if the unit ball of the space is strictly convex. When the symmetry of the norm is dropped, i.e., when the centrally symmetric unit ball is replaced by an arbitrary convex body,
then the above three conditions are no longer equivalent. We show for the latter case that every bounded set has a unique circumball if and only if every set of cardinality at most $n$ has a unique circumball. We also give an equivalent condition in terms of the geometry of the unit ball.
In similar terms we answer the following more general question for every $k \in \{0,\ldots,n-2\}$: When are the dimensions of the sets of all circumcenters of arbitrary bounded sets not larger than $k$?
	\end{abstract}
	\maketitle

		
\section{Introduction}

The interest in finding the balls of minimal radius containing a
given set (its circumballs) has been vocalized for the first time by Sylvester \cite{Sylvester1857}.
Since then, the problem received attention from several mathematical communities, resulting in various names under which the problem is known (center problem, minimal enclosing ball problem, minimax location problem, Sylvester problem, optimal containment problem).
The attention has not been restricted to the solution of
the original problem which Sylvester posed for the Euclidean plane, but has also yielded extensions of the problem obtained by transferring conceptual details of the problem to other contexts in order to apply specific methods.
The centers of the circumballs of a given set (its circumcenters) serve as approximations of the latter in the sense that they are uniformly close to all of its points. Therefore it is promising to investigate the geometry of the set of circumcenters.

In his paper \cite{Zindler1920}, Zindler proves not only the uniqueness of the circumcenter in two-dimensional and three-dimensional Euclidean space.
Zindler also shows that the points where the given set touches the boundary of its circumball are well spread in the sense that they
do not lie in a hemisphere.
Finally, Zindler proposes in \cite{Zindler1920} the study of an analogous problem in multiple dimensions where Euclidean balls have been replaced by homothetic copies of an arbitrary but fixed convex body $C$.
As a first step to the full generality of this extension, it has been investigated in normed spaces, i.e. $C=-C$, as early as 1962 \cite{Garkavi1962}.
Variants of Sylvester's problem and Zindler's extension also appear in approximation theory \cite{AmirZi1980}, location science \cite{ElzingaHe1972a}, computational geometry \cite[Theorem~14]{ShamosHo1975}, and convex analysis \cite{NamHo2013}. An elementary account of the location of circumcenters of triangles in normed planes is \cite{AlonsoMaSp2012b}.
Termed optimal containment problem, the generalized problem has been addressed in \cite{BrandenbergRo2011,EavesFr1982},
but also in \cite{BrandenbergKoe2013} where one can find a corresponding result on touching points being well spread.

The starting point of our work is the following characterization of normed spaces that contain subsets having more than one circumcenter.
\begin{theorem}[{\cite[Lemma~8]{JahnMaRi2017}}, see also {\cite[Theorem~VI]{Garkavi1962}}]\label{thm:UniqueCircumSymmetric}
Let $(\R^n,\|\cdot\|)$ be a normed space with unit ball $B$. Then the following are equivalent:
\begin{enumerate}[label={(\alph*{$_0$})},leftmargin=*,align=left,noitemsep]
\item{There exists a convex  body $K \subseteq \R^n$ not having a unique circumcenter.\label{nonunique-symmetric-body}}
\item{There exists a set $\{x_1,x_2\} \subseteq \R^n$ not having a unique circumcenter.\label{nonunique-symmetric-2}}
\item{The ball $B$ is not strictly convex; that is, the boundary of $B$ contains a non-degenerate line segment.\label{existence-segment}}
\end{enumerate}
\end{theorem}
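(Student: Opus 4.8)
The plan is to close the cycle of implications \ref{nonunique-symmetric-body}$\Rightarrow$\ref{existence-segment}$\Rightarrow$\ref{nonunique-symmetric-2}$\Rightarrow$\ref{nonunique-symmetric-body}. Two preliminaries will streamline everything. First, in a finite-dimensional normed space every bounded set $X$ has a circumcenter: the function $c\mapsto\sup_{x\in X}\|x-c\|$ is convex, continuous and tends to $\infty$ as $\|c\|\to\infty$, so it attains its minimum on $\R^n$, and its minimizers are exactly the circumcenters. Second, replacing $X$ by $\cl X$ or by $\conv X$ leaves both the circumradius and the set of circumcenters unchanged, since $c+rB$ is closed and convex. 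Hence ``not having a unique circumcenter'' simply means possessing at least two of them, and I may freely pass to compact sets.

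I would obtain \ref{nonunique-symmetric-body}$\Rightarrow$\ref{existence-segment} by contraposition, showing that strict convexity of $B$ forces a unique circumcenter for \emph{every} bounded set. Suppose $B$ is strictly convex and $c_1\ne c_2$ were two circumcenters of a compact $X$ of circumradius $R$. For the midpoint $c=\tfrac12(c_1+c_2)$ and any $x\in X$ write $a=x-c_1$, $b=x-c_2$; then $\|a\|,\|b\|\le R$ and $\|x-c\|=\bigl\|\tfrac12(a+b)\bigr\|\le R$. Equality would force $\|a\|=\|b\|=R$ together with $\|a+b\|=\|a\|+\|b\|$, which for a strictly convex norm entails $a=b$, i.e.\ $c_1=c_2$, a contradiction. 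Thus $\|x-c\|<R$ for each $x\in X$, and by compactness $\max_{x\in X}\|x-c\|<R$, contradicting the minimality of $R$. So the circumcenter is unique, and neither \ref{nonunique-symmetric-body} nor \ref{nonunique-symmetric-2} can hold. The only nonroutine point here is the equality case of the triangle inequality, which is precisely the analytic content of strict convexity.

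For \ref{existence-segment}$\Rightarrow$\ref{nonunique-symmetric-2}, assume $\bd B$ contains a nondegenerate segment $[u,v]$ with $u\ne v$ and $\|u\|=\|v\|=1$; its midpoint $m=\tfrac12(u+v)$ then also lies on $\bd B$, so $\|u+v\|=2$. I claim the two-point set $\{u,-v\}$ has circumradius $1$ and a whole segment of circumcenters. Indeed, for $t\in[0,1]$ put $c_t=t(u-v)$; then $u-c_t=(1-t)u+tv\in[u,v]\subseteq\bd B$ and $-v-c_t=-\bigl(tu+(1-t)v\bigr)\in-[u,v]\subseteq\bd B$, so $\|u-c_t\|=\|-v-c_t\|=1$ and each $c_t$ centers a unit ball containing $\{u,-v\}$; hence the circumradius is at most $1$. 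Conversely $R(\{u,-v\})\ge\tfrac12\|u-(-v)\|=\tfrac12\|u+v\|=1$, so it equals $1$ and every $c_t$ is a genuine circumcenter. Since $c_0=0\ne u-v=c_1$, uniqueness fails. The crux is the sharp lower bound, which is exactly why one needs the midpoint of the boundary segment to have norm $1$.

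Finally, \ref{nonunique-symmetric-2}$\Rightarrow$\ref{nonunique-symmetric-body} follows by thickening. Given any two-point set $\{x_1,x_2\}$ with at least two circumcenters, consider the convex body $K=\conv\{x_1,x_2\}+\delta B$ for a fixed $\delta>0$; it is compact, convex and full-dimensional. Using $sB+tB=(s+t)B$ for $s,t\ge0$ and $\sup_{\|b\|\le\delta}\|a-c+b\|=\|a-c\|+\delta$, one checks that $K\subseteq c+rB\iff\{x_1,x_2\}\subseteq c+(r-\delta)B$ for $r\ge\delta$. Consequently $R(K)=R(\{x_1,x_2\})+\delta$ and $K$ has \emph{exactly} the same circumcenters as $\{x_1,x_2\}$, so $K$ witnesses \ref{nonunique-symmetric-body} and the cycle is closed. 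I expect the main obstacle to be the construction in \ref{existence-segment}$\Rightarrow$\ref{nonunique-symmetric-2} — arranging the two points so that the optimal centers form a segment while the circumradius is not accidentally smaller — whereas the thickening and the strict-convexity argument are routine once the equality case of the triangle inequality is in hand.
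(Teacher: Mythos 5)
Your proof is correct. Note first that the paper itself gives no proof of this theorem --- it is quoted from \cite[Lemma~8]{JahnMaRi2017} and \cite[Theorem~VI]{Garkavi1962} --- so the fair comparison is with the machinery the paper \emph{would} use: the statement is the $k=0$ case of \hyperref[thm:DimNorm]{Theorem~\ref*{thm:DimNorm}}, whose proof routes \ref{nonunique-symmetric-body}$\Rightarrow$\ref{existence-segment} through \hyperref[thm:UniqueCircumGauge]{Theorem~\ref*{thm:UniqueCircumGauge}} and hence through the Brandenberg--K\"onig optimality criterion (\hyperref[opt]{Lemma~\ref*{opt}}). Your argument replaces that machinery by the elementary midpoint trick: strict convexity plus the equality case of the triangle inequality forces the midpoint of two distinct circumcenters to do strictly better, which is a genuinely more self-contained route to \ref{nonunique-symmetric-body}$\Rightarrow$\ref{existence-segment} (at the price of not generalizing to $\dim(\cc(K,B))>k$, which is what the paper's normal-vector argument buys). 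Your \ref{existence-segment}$\Rightarrow$\ref{nonunique-symmetric-2} construction with $\{u,-v\}$ and centers $c_t=t(u-v)$ is essentially the same as the paper's argument in \hyperref[thm:DimNorm]{Theorem~\ref*{thm:DimNorm}}, which uses $\{x_S,-x_S\}$ for $x_S$ in the relative interior of a support set; both hinge on the lower bound $R(\{x,y\},B)\ge\frac12\|x-y\|$, and it is worth flagging (as your write-up implicitly does) that this is exactly where symmetry of the norm enters and exactly what fails for gauges --- the simplex example in the paper's remark shows \ref{existence-segment} does not imply \ref{nonunique-symmetric-body} for asymmetric unit balls. Two small remarks: your thickening step $K=[x_1,x_2]+\delta B$ in \ref{nonunique-symmetric-2}$\Rightarrow$\ref{nonunique-symmetric-body} is correct but unnecessary, since the paper's $\mathcal K^n$ does not require interior points, so $K=\conv\{x_1,x_2\}$ already qualifies by your own preliminary observation that passing to the convex hull changes neither circumradius nor circumcenters; and in the strict-convexity step one should handle $R=0$ separately (there $a=b=0$ gives $c_1=c_2$ at once), which your equality-case analysis covers but does not mention.
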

Here \ref{nonunique-symmetric-2} says that it suffices to have uniqueness for sets of two points to get uniqueness for all bodies. Condition \ref{existence-segment} is a simple geometric property of the unit ball.

We generalize the situation in two ways. On the one hand, we drop the symmetry condition of the unit ball $B$. That is, we replace $B$ by an arbitrary full-dimensional convex body $C$ and ask for covers of bounded sets by smallest possible homothetical images of $C$, as originally proposed in \cite{Zindler1920}. On the other hand, note that condition \ref{nonunique-symmetric-body} claims the existence of some body $K$ whose set of circumcenters has a dimension larger than $0$. We shall characterize, more generally, the situation where this dimension exceeds $k$. The above mentioned result of Brandenberg and K\"onig \cite[Theorem~2.3]{BrandenbergKoe2013} will serve for linking the affine dimension of the set of circumcenters to the boundary structure of $C$.

We use the following notations.
Let $\mathcal{K}^n= \{K \subseteq \R^n: K$ is compact, convex and non-empty$\}$ denote the family of \emph{convex bodies} in $\R^n$ (where compactness is understood in the canonical topology of the linear space $\R^n$). Every $B \in \mathcal{K}^n$ that is symmetric with respect to the origin $0$, i.e., $B=-B$, and has a non-empty interior defines a norm $\|x\|= \min\{\lambda \ge 0: x \in \lambda B\}$ on $\R^n$. We speak of a \emph{Minkowski space} (or finite-dimensional real normed space) $(\R^n,\|\cdot\|)$ with \emph{unit ball} $B$. When replacing $B$ by some $C \in \mathcal{K}^n$ that contains $0$ in its interior $\inter(C)$, we obtain a \emph{gauge} $\gamma(x)=\min\{\lambda \ge 0: x \in \lambda C\}$ on $\R^n$. In general, gauges are not symmetric in the sense that $\gamma(x)=\gamma(-x)$. We call $(\R^n,\gamma)$ a \emph{generalized Minkowski space} with \emph{unit ball} (or \emph{gauge body}) $C$ (cf., e.g., \cite[p.~4]{Zalinescu2002}).

The \emph{circumradius} of a bounded non-empty set $A \subseteq \R^n$ with respect to $C$ is
$$
R(A,C)=\inf\{\varrho \ge 0: \text{ there exists } x \in \R^n \text{ such that } A \subseteq \varrho C+x\}.
$$
When $A \subseteq R(A,C)C+x$, we call $R(A,C)C+x$ a \emph{circumball} and $x$ a \emph{circumcenter} or \emph{Chebyshev center} \cite{Garkavi1962} of $A$. Clearly, $R(A,C)=R(\cl(A),C)=R(\conv(A),C)$, where $\cl(A)$ and $\conv(A)$ denote the closure and the convex hull of $A$, respectively. Therefore it suffices to study circumcenters of convex bodies. The set of all circumcenters
$$
\cc(A,C)=\{x \in \R^n: A \subseteq R(A,C)C+x\}
$$
is sometimes called the \emph{Chebyshev set} of $A$ \cite{MMS}.
This set is known to be a convex body whose dimension does not exceed $n-1$ \cite[Corollary~4.5 and Theorem~4.7]{Jahn2017}.

We will use $\langle \cdot,\cdot \rangle$ for denoting the standard inner product in $\R^n$. The \emph{support function} of $K \in \mathcal K^n$ for $u \in \R^n$ is denoted by $h(K,u)=\max\{\langle x,u \rangle: x \in K\}$.
A \emph{support set} of $K\in\mathcal K^n$ is a set $A\subseteq\bd(K)$ such that $A=\{x\in K:h(K,u)=\langle x,u\rangle\}$ for some $u\in\R^n\setminus\{0\}$.

The \emph{normal cone} of $K$ at a point $x$ from the boundary $\bd(K)$ of $K$ is $N(K,x)=\{u \in \R^n: h(K,u)=\langle x,u \rangle\}$; that is, $N(K,x) \setminus \{0\}$ consists of all outer normal vectors of $K$ at $x$. The \emph{relative interior} $\relint(K)$ of $K \in \mathcal K^n$ is the interior of $K$ with respect to the canonical topology of the affine span of $K$.

Let $\lin(A)$ denote the \emph{linear span} of $A \subseteq \R^n$. The linear subspace orthogonal to $A$ is given by
$$
A^\perp=\{x \in \R^n: \langle a,x \rangle=0 \mbox{ for all } a \in A\}.
$$
We write $\dim(A)$ for the \emph{dimension} of the affine span of $A$.
The \emph{line segment} with endpoints $v,w \in \R^n$ is $[v,w]=\conv\{v,w\}$. The $i$th \emph{unit coordinate vector} is denoted by $e_i=(0,\ldots,0,\stackrel{i}{1},0,\ldots,0) \in \R^n$.


\section{Main result}

\begin{theorem}\label{thm:UniqueCircumGauge}
Let $(\R^n,\gamma)$ be a generalized Minkowski space with unit ball $C$ and let $k \in \{0,\ldots,n-2\}$. Then the following are equivalent:
\begin{enumerate}[label={(\alph*{$_k$})},leftmargin=*,align=left,noitemsep]
\item{There exists $K \in \mathcal{K}^n$ such that $\dim(\cc(K,C)) > k$.\label{dim-k-body-asymmetric}}\renewcommand{\labelenumi}{(\alph{enumi}$_k^\prime$)}
\item{There exist $x_1,\ldots,x_{n-k} \in \R^n$ such that $\dim\left(\cc(\{x_1,\ldots,x_{n-k}\},C)\right) > k$.\label{dim-k-asymmetric-n}}
\item{There exist $x_1,\ldots,x_{n-k} \in \bd(C)$ and a body
$A_k \in \mathcal{K}^n$ with $\dim(A_k) > k$ such that
\begin{itemize}[label={\textbullet},leftmargin=*,align=left,noitemsep]
\item{$x_i+A_k \subseteq \bd(C)$ for $i=1,\ldots,{n-k}$ and}
\item{there are outer normal vectors $u_i \in N(C,x_i) \setminus \{0\}$, $i=1,\ldots,{n-k}$, such that $0 \in \conv\{u_1,\ldots,u_{n-k}\}$.}
\end{itemize}
\label{existence-dim-k-asymmetric}}
\end{enumerate}
(The vectors $x_1,\ldots,x_{n-k}$ are not required to be mutually different.)
\end{theorem}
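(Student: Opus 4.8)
The plan is to prove the equivalences as a cycle $\ref{existence-dim-k-asymmetric}\Rightarrow\ref{dim-k-asymmetric-n}\Rightarrow\ref{dim-k-body-asymmetric}\Rightarrow\ref{existence-dim-k-asymmetric}$. The implication $\ref{dim-k-asymmetric-n}\Rightarrow\ref{dim-k-body-asymmetric}$ is immediate, since a finite set and its convex hull share the same circumradius and the same Chebyshev set, so $K=\conv\{x_1,\ldots,x_{n-k}\}$ witnesses \ref{dim-k-body-asymmetric}. The two remaining implications carry the content, and I expect the hard part to be $\ref{dim-k-body-asymmetric}\Rightarrow\ref{existence-dim-k-asymmetric}$, where one must recover a concrete finite contact configuration from an abstract high-dimensional Chebyshev set.

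For $\ref{existence-dim-k-asymmetric}\Rightarrow\ref{dim-k-asymmetric-n}$ I would simply take the points of the configuration themselves, i.e.\ consider $\{x_1,\ldots,x_{n-k}\}$. As each $x_i\in\bd(C)\subseteq C$, we have $R(\{x_1,\ldots,x_{n-k}\},C)\le 1$; I would prove equality from the well-spread condition. Writing $0=\sum_i\alpha_iu_i$ as a convex combination and using $\langle x_i,u_i\rangle=h(C,u_i)$ (because $u_i\in N(C,x_i)$), any containment $\{x_i\}\subseteq\lambda C+t$ forces $(1-\lambda)h(C,u_i)\le\langle t,u_i\rangle$ for each $i$; taking the $\alpha_i$-combination gives $(1-\lambda)\sum_i\alpha_ih(C,u_i)\le\langle t,\sum_i\alpha_iu_i\rangle=0$, and since $h(C,u_i)>0$ (as $0\in\inter(C)$) this yields $\lambda\ge 1$. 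Thus $R=1$ and $0\in\cc(\{x_i\},C)$. Finally, for every $a\in A_k$ the hypothesis $x_i+A_k\subseteq\bd(C)\subseteq C$ gives $x_i+a\in C$ for all $i$, i.e.\ $-a\in\cc(\{x_i\},C)$; hence $-A_k\subseteq\cc(\{x_i\},C)$ and $\dim(\cc(\{x_i\},C))\ge\dim(A_k)>k$.

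For the hard implication $\ref{dim-k-body-asymmetric}\Rightarrow\ref{existence-dim-k-asymmetric}$ I would first normalize: scaling makes $R(K,C)=1$, and translating $K$ by a point of $\relint(\cc(K,C))$ places a relative interior circumcenter at the origin, so $K\subseteq C$, $0\in\relint(\cc(K,C))$, and $M:=\cc(K,C)$ has $\dim(M)>k$. Let $U_0=\{u:h(K,u)=h(C,u)\}$ denote the active directions. The key observation is $M\perp u$ for every $u\in U_0$: from $t\in M$ we get $\langle t,u\rangle\ge h(K,u)-h(C,u)=0$, and because $0\in\relint(M)$ the point $-\varepsilon t$ lies in $M$ for small $\varepsilon>0$, forcing $\langle t,u\rangle=0$. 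Hence $M\subseteq\lin(U_0)^\perp$, so $\dim(\lin(U_0))\le n-\dim(M)\le n-k-1$. The optimality characterization of Brandenberg and K\"onig \cite[Theorem~2.3]{BrandenbergKoe2013} supplies contact points $p_j\in K\cap\bd(C)$ with common outer normals $u_j$ (so $u_j\in U_0$) satisfying $0\in\conv\{u_j\}$; since all these $u_j$ lie in the at most $(n-k-1)$-dimensional space $\lin(U_0)$, Carath\'eodory's theorem lets me keep only $n-k$ of them (repeating vectors if necessary) while retaining $0\in\conv\{u_1,\ldots,u_{n-k}\}$. I then set $x_i:=p_i$ and $A_k:=-M$. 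Here $\dim(A_k)=\dim(M)>k$, and for $t\in M$ one has $\langle x_i-t,u_i\rangle=h(C,u_i)-0=h(C,u_i)$ together with $x_i-t\in K-t\subseteq C$ (as $x_i\in K$ and $t\in\cc(K,C)$), so $x_i-t$ lies in the support set of $C$ in direction $u_i$; hence $x_i+A_k=x_i-M\subseteq\bd(C)$, which establishes \ref{existence-dim-k-asymmetric}.

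The main obstacle is this last step: converting the purely dimensional hypothesis $\dim(\cc(K,C))>k$ into a finite contact configuration. It rests on two points needing care --- the orthogonality $M\perp\lin(U_0)$, which is what turns the dimension of the Chebyshev set into the span bound $\dim(\lin(U_0))\le n-k-1$ and thereby controls the number $n-k$ of normals, and the correct use of the Brandenberg and K\"onig characterization to ensure the selected $u_i$ are simultaneously outer normals of $C$ at genuine contact points of $K$. I will also record that $\cc(K,C)\in\mathcal{K}^n$, so that $A_k=-\cc(K,C)$ is an admissible body, which is available from \cite[Corollary~4.5 and Theorem~4.7]{Jahn2017}.
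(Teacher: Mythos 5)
Your proof is correct and takes essentially the same route as the paper: the same cyclic scheme of implications, the Brandenberg--K\"onig lemma to produce contact points $x_i\in K\cap\bd(C)$ with normals satisfying $0\in\conv\{u_i\}$, orthogonality of $\cc(K,C)$ to these normals to bound the dimension of their span by $n-k-1$, Carath\'eodory's theorem to cut down to $n-k$ normals, and the choice $A_k=-\cc(K,C)$. The only (equally valid) micro-variations are that you verify $R(\{x_1,\ldots,x_{n-k}\},C)=1$ in \ref{existence-dim-k-asymmetric}$\Rightarrow$\ref{dim-k-asymmetric-n} by a direct support-function computation where the paper simply cites the lemma, and you derive $\langle t,u_i\rangle=0$ by translating so that $0\in\relint(\cc(K,C))$, whereas the paper instead perturbs the convex combination to have all weights $\lambda_i>0$ and concludes from $0=\sum_i\lambda_i\langle v,u_i\rangle$ with $\langle v,u_i\rangle\ge 0$.
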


We shall use the following tool.
\begin{lemma}[{\cite[Theorem~2.3]{BrandenbergKoe2013}}]\label{opt}
Let $C\in\mathcal K^n$ be the unit ball of a generalized Minkowski space $(\R^n,\gamma)$ and let $K\in\mathcal K^n$ be such that $K\subseteq C$. The following are equivalent:
\begin{itemize}[label={\textbullet},leftmargin=*,align=left,noitemsep]
\item{$R(K,C)=1$.}
\item{There exist $x_1,\ldots,x_l\in K\cap\bd(C)$ for some $2\leq l\leq n+1$
and outer normals $u_i\in N(C,x_i)\setminus\{0\}$, $i=1,\ldots,l$, such that $0\in\conv\{u_1,\ldots,u_l\}$.}
\end{itemize}
\end{lemma}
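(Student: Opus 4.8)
The plan is to recognize Lemma~\ref{opt} as a first-order optimality condition for the convex minimization problem defining the circumradius and to translate that condition into the stated geometric statement about touching points and normals. Throughout, write $\gamma=\gamma_C$ for the gauge of $C$ and recall that, since $0\in\inter(C)$, the gauge is the support function of the polar body $C^{\circ}=\{u\in\R^n:\langle x,u\rangle\le 1\text{ for all }x\in C\}$, i.e.\ $\gamma(y)=h(C^{\circ},y)$. The first step is to rewrite
$$
R(K,C)=\min_{c\in\R^n}f(c),\qquad f(c)=\max_{y\in K}\gamma(y-c)=\max_{u\in C^{\circ}}\bigl(h(K,u)-\langle c,u\rangle\bigr),
$$
where $f$ is a finite convex function. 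Since $K\subseteq C$ we have $f(0)\le 1$, and the maximum defining $f(0)$ is attained at some $y_0\in K$ with $\gamma(y_0)=f(0)$, so $f(0)=1$ forces $y_0\in K\cap\bd(C)$. Consequently, under the standing hypothesis $K\subseteq C$ one has $R(K,C)=1$ if and only if $0$ minimizes $f$ and $f(0)=1$.

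For the easy implication (second bullet $\Rightarrow$ first) I would argue directly. Given touching points $x_i\in K\cap\bd(C)$, normals $u_i\in N(C,x_i)\setminus\{0\}$ and weights $\lambda_i\ge 0$ with $\sum_i\lambda_i=1$ and $\sum_i\lambda_iu_i=0$, suppose toward a contradiction that $K\subseteq\varrho C+c$ for some $\varrho<1$. Then $x_i-c\in\varrho C$, so $\langle x_i-c,u_i\rangle\le\varrho\,h(C,u_i)=\varrho\langle x_i,u_i\rangle$, whence $(1-\varrho)h(C,u_i)\le\langle c,u_i\rangle$; here $h(C,u_i)>0$ because $0\in\inter(C)$. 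Taking the $\lambda_i$-weighted sum and using $\sum_i\lambda_iu_i=0$ yields $0=\langle c,\sum_i\lambda_iu_i\rangle\ge(1-\varrho)\sum_i\lambda_ih(C,u_i)>0$, a contradiction. Hence no homothet $\varrho C+c$ with $\varrho<1$ contains $K$, so $R(K,C)\ge 1$; together with $R(K,C)\le 1$, which holds because $K\subseteq C$, we obtain $R(K,C)=1$.

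For the hard implication (first bullet $\Rightarrow$ second) the idea is to compute the subdifferential of $f$ at the optimum. Assuming $R(K,C)=1$, the point $0$ minimizes the convex function $f$, so $0\in\partial f(0)$. By the subdifferential formula for a pointwise maximum (Danskin's theorem), $\partial f(0)=\conv\{-u:u\in U^{*}\}$, where $U^{*}=\{u\in C^{\circ}:h(K,u)=1\}$ is the compact set of active directions. A direction $u$ lies in $U^{*}$ exactly when there is $x\in K$ with $\langle x,u\rangle=1=h(C,u)$; such an $x$ satisfies $x\in K\cap\bd(C)$ and $u\in N(C,x)\setminus\{0\}$, so the elements of $U^{*}$ are precisely outer normals at touching points. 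From $0\in\partial f(0)$ we obtain $0\in\conv(U^{*})$, and Carathéodory's theorem lets us write $0=\sum_{i=1}^{l}\lambda_iu_i$ as a convex combination of at most $n+1$ such normals; since every $u_i\neq 0$, at least two are needed, giving $2\le l\le n+1$. Picking for each $u_i$ a touching point $x_i\in K\cap\bd(C)$ with $u_i\in N(C,x_i)$ yields the asserted configuration.

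The main obstacle is the hard direction, and within it the clean identification of the subdifferential. Making the step $0\in\partial f(0)$ rigorous requires either invoking Danskin's theorem with the correct active set or, equivalently, running a self-contained separation argument: if $0\notin\conv(U^{*})$ then, $U^{*}$ being compact and not containing $0$, strict separation produces a direction $v$ with $\langle u,v\rangle>0$ for all $u\in U^{*}$, and a uniform estimate (splitting $K$ into its points on and off $\bd(C)$) shows $f(tv)<f(0)$ for small $t>0$, contradicting optimality. I expect the bookkeeping in this separation-plus-uniform-estimate route—controlling the touching and non-touching parts of $K$ simultaneously—to be the most delicate part; the Carathéodory reduction to $l\le n+1$ and the lower bound $l\ge 2$ are then immediate.
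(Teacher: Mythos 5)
The paper does not prove this lemma at all: it is imported as a black box from \cite[Theorem~2.3]{BrandenbergKoe2013}, so there is no in-paper argument to compare against. Judged on its own merits, your proof is correct. The reformulation $R(K,C)=\min_{c}f(c)$ with $f(c)=\max_{u\in C^{\circ}}\bigl(h(K,u)-\langle c,u\rangle\bigr)$ is right, the observation that under $K\subseteq C$ one has $R(K,C)=1$ iff $f(0)=1=\min f$ is right, and the easy direction is a clean direct verification (the positivity $h(C,u_i)>0$ from $0\in\inter(C)$, together with at least one $\lambda_i>0$, is exactly what makes the weighted sum yield a strict contradiction; and since $R$ is an infimum, excluding all $\varrho<1$ does give $R\geq 1$). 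In the hard direction your identification of the active set $U^{*}=\{u\in C^{\circ}:h(K,u)=1\}$ with the normals $h(C,u)=1$ at touching points is correct, because $h(K,u)=1\le h(C,u)\le 1$ forces $h(C,u)=1$, and the Carath\'eodory reduction with discarding of zero weights gives $2\le l\le n+1$ (with $l=1$ excluded since $u_1\neq 0$).

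Two points of precision, neither fatal. First, Danskin's theorem in its classical form yields only the directional derivative $f'(0;d)=\max_{u\in U^{*}}\langle -u,d\rangle$; the full subdifferential identity $\partial f(0)=\conv\{-u:u\in U^{*}\}$ is the Valadier/Ioffe--Tikhomirov max-formula. But you do not actually need that identity, nor the ``delicate uniform estimate splitting $K$'' that you flag as the main obstacle: from optimality of $0$ you get $f'(0;d)\geq 0$ for all $d$, and if $0\notin\conv(U^{*})$ then strict separation of the origin from the compact convex set $\conv(U^{*})$ (compactness of $U^{*}$ is immediate: it is a closed subset of the compact polar $C^{\circ}$, nonempty since the maximum defining $f(0)=1$ is attained) produces $d$ with $f'(0;d)\le -\varepsilon<0$, a contradiction. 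The bookkeeping over touching and non-touching parts of $K$ is thus absorbed entirely into the one-line Danskin computation of $f'(0;d)$, and the argument closes rigorously. Second, a cosmetic remark: your route via gauge duality and first-order optimality is essentially the classical Garkavi-style derivation of such touching-and-normals criteria, and is a legitimate self-contained substitute for the citation the paper relies on.
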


\begin{myproof}
The implication \ref{dim-k-asymmetric-n}$\Rightarrow$\ref{dim-k-body-asymmetric} is obvious.

To see \ref{existence-dim-k-asymmetric}$\Rightarrow$\ref{dim-k-asymmetric-n}, we use the points $x_1,\ldots,x_{n-k}$ from \ref{existence-dim-k-asymmetric}. By \hyperref[opt]{Lemma~\ref*{opt}}, the second part of \ref{existence-dim-k-asymmetric} shows that $R(\{x_1,\ldots,x_{n-k}\},C)=R(\conv\{x_1,\ldots,x_{n-k}\},C)=1$. The first part of \ref{existence-dim-k-asymmetric} gives $\{x_1,\ldots,x_{n-k}\}+A_k \subseteq C$; i.e., $\{x_1,\ldots,x_{n-k}\}\subseteq C-v$ for all $v \in A_k$. So $-A_k \subseteq \cc(\{x_1,\ldots,x_{n-k}\},C)$ and $\dim\left(\cc(\{x_1,\ldots,x_{n-k}\},C)\right) > k$.

For proving \ref{dim-k-body-asymmetric}$\Rightarrow$\ref{existence-dim-k-asymmetric}, we can suppose that $R(K,C)=1$ and $K \subseteq C$.
By \hyperref[opt]{Lemma~\ref*{opt}}, there are
$x_i \in K \cap \bd(C)$ and $u_i \in N(C,x_i) \setminus \{0\}$, $i=1,\ldots,n+1$, such that $0 \in \conv\{u_1,\ldots,u_{n+1}\}$. So
\begin{equation}\label{eq1}
0= \sum_{i=1}^{n+1} \lambda_i u_i
\end{equation}
for suitable $\lambda_i \ge 0$. Moreover, we can suppose that
\begin{equation}\label{eq2}
\lambda_i > 0, \qquad i=1,\ldots,n+1,
\end{equation}
since we can replace all $x_i$ with $\lambda_i=0$ by some $x_j$ with $\lambda_j > 0$ and split $\lambda_j$ accordingly.

We put $L=\lin \{u_1,\ldots,u_{n+1}\}$. Let $v \in \cc(K,C)$; i.e., $K \subseteq C+v$.
For every $i \in \{1,\ldots,n+1\}$, we have $x_i-v \in K-v \subseteq C$.
This yields $\langle x_i-v,u_i \rangle \le h(C,u_i)=\langle x_i,u_i \rangle$, because $u_i \in N(C,x_i)$. So $\langle v,u_i \rangle \ge 0$ for $i=1,\ldots,n+1$. This together with
$$
0=\langle v,0 \rangle\stackrel{\text{\eqref{eq1}}}{=}\sum_{i=1}^{n+1} \lambda_i \langle v,u_i \rangle
$$
and \eqref{eq2} implies
\begin{equation}\label{eq3}
\langle v,u_i \rangle = 0 \quad\text{for}\quad i=1,\ldots,n+1.
\end{equation}
Consequently, $L \subseteq (\cc(K,C))^\perp$ and $\dim(L) \le n- \dim(\cc(K,C)) < n-k$.

Now Carath\'eodory's theorem shows that the condition $0 \in \conv\{u_1,\ldots,u_{n+1}\}$ can be strengthened to $0 \in \conv\{u_1,\ldots,u_{n-k}\}$ (when $x_1,\ldots,x_{n+1}$ are reordered accordingly), and the second claim of \ref{existence-dim-k-asymmetric} is verified.

Finally, to verify that the first claim of \ref{existence-dim-k-asymmetric} is satisfied with $A_k=-\cc(K,C)$, we shall show that $x_i-v \in \bd(C)$ for all $i \in \{1,\ldots,n-k\}$ and $v \in \cc(K,C)$. We have
$$
x_i-v \in K-v \subseteq (C+v)-v=C.
$$
By $u_i \in N(C,x_i)$, we obtain $h(C,u_i)=\langle x_i,u_i \rangle$. Thus,
$$
\langle x_i-v,u_i \rangle \stackrel{\text{\eqref{eq3}}}{=} \langle x_i,u_i \rangle=h(C,u_i).
$$
The observations $x_i- v \in C$ and $\langle x_i- v,u_i \rangle=h(C,u_i)$ show that $x_i- v \in \bd(C)$. This completes the proof.
\end{myproof}

\begin{remark}
\begin{enumerate}[label={(\roman*)},leftmargin=*,align=left,noitemsep]
\item{A weaker version of condition \ref{dim-k-asymmetric-n} with $\{x_1,\ldots,x_{n-k}\}$ replaced by $\{x_1,\ldots,x_{n+1}\}$ was known to be equivalent to \ref{dim-k-body-asymmetric}, due to \hyperref[opt]{Lemma~\ref*{opt}}.}
\item{Note that the number of points in condition \ref{dim-k-asymmetric-n} of
\hyperref[thm:UniqueCircumGauge]{Theorem~\ref*{thm:UniqueCircumGauge}} is best possible. The following example gives, for all $n \ge 2$ and $0 \le k \le n-2$, an $n$-dimensional generalized Minkowski space such that every set of cardinality at most $n-k-1$ has a unique circumcenter, whereas there exist sets of $n-k$ points having a Chebyshev set of dimension larger than $k$.}
\end{enumerate}
\end{remark}

\begin{example}\label{Ex:Sharpness_of_new_theorem}
Suppose that $n \ge 2$ and $0 \le k \le n-2$. We consider a regular (in the Euclidean sense) $(n-k-1)$-dimensional simplex $\triangle_{n-k-1} \subseteq \lin\{e_1,\ldots,e_{n-k-1}\}$
with vertices $x_1,\ldots,x_{n-k}$ and center of gravity at the origin $0$, and the $(k+1)$-dimensional cube $\Box_{k+1} = [-e_{n-k},e_{n-k}] \times \ldots \times [-e_n,e_n]$. Then we pick a convex body $C\in\mathcal K^n$ with
\begin{equation}\label{eq3'}
\triangle_{n-k-1}+\Box_{k+1} \subseteq C \subseteq (n-k-1)(-\triangle_{n-k-1})+2\Box_{k+1}
\end{equation}
such that
\begin{enumerate}[label={(\greek*)},leftmargin=*,align=left,noitemsep]
\item{$C$ is smooth (i.e., in every $x \in \bd(C)$ there is only one tangent hyperplane),\label{C-smooth}}
\item{all points of $\bd(C) \setminus \bigcup_{i=1}^{n-k} (x_i+\Box_{k+1})$ are exposed,\label{C-exposed}}
\end{enumerate}
see \cite{Gh} for a justification of the existence of $C$. Note that properties \eqref{eq3'}, \ref{C-smooth}, and \ref{C-exposed} imply
\begin{enumerate}[label={(\greek*$^\prime$)},leftmargin=*,align=left,noitemsep]
\item{$N(C,x_i+v)=\{\lambda x_i: \lambda \ge 0\}$ for all $i=1,\ldots,n-k$ and $v \in \Box_{k+1}$,\label{C-normal}}
\item{$x_i+\Box_{k+1}$, $i=1,\ldots,n-k$, are the only support sets of $C$
that are not singletons.\label{C-support}}
\end{enumerate}

We see that $C$ satisfies \ref{existence-dim-k-asymmetric} from \hyperref[thm:UniqueCircumGauge]{Theorem~\ref*{thm:UniqueCircumGauge}} with $A_k=\Box_{k+1}$, because $0=\sum_{i=1}^{n-k} \frac{1}{n-k} x_i$. So there are sets $X$ of cardinality $n-k$, such as $X=\{x_1,\ldots,x_{n-k}\}$, satisfying $\dim(\cc(X,C))>k$.

Now we show that every set of cardinality at most $n-k-1$ has a unique circumcenter. Assume that this is not the case. Then there exist $Y=\{y_1,\ldots,y_{n-k-1}\}$ and $t \in \R^n\setminus \{0\}$ such that
\begin{equation}\label{eq4}
R(Y,C)=1 \qquad\text{and}\qquad Y+\mu t \subseteq C \text{ for all } \mu \in [-1,1].
\end{equation}
Applying \hyperref[opt]{Lemma~\ref*{opt}} to the situation $\mu=0$, we obtain, say, $y_i \in Y \cap \bd(C)$, $1 \le i \le l$, and $u_i \in N(C,y_i) \setminus \{0\}$ such that $0 \in \conv\{u_1,\ldots,u_l\}$. The inclusion $y_i \in Y \cap \bd(C)$ together with property \eqref{eq4} show that $[y_i-t,y_i+t] \subseteq \bd(C)$ for $i=1,\ldots,l$. By \ref{C-support}, the points $y_1,\ldots,y_l$ belong to the $n-k$ cubes mentioned in \ref{C-support}. Now \ref{C-normal} tells us that the normal vectors $u_1,\ldots,u_l$ are positive multiples of not more than $l \le n-k-1$ of the vectors $x_1,\ldots,x_{n-k}$. This contradicts $0 \in \conv\{u_1,\ldots,u_l\}$.
\end{example}


\section{Further observations}

We obtain the following analogue of \hyperref[thm:UniqueCircumGauge]{Theorem~\ref*{thm:UniqueCircumGauge}} for normed spaces.
\begin{theorem}\label{thm:DimNorm}
Let $(\R^n,\|\cdot\|)$, $n \ge 2$, be a Minkowski space with unit ball $B$ and let $k \in \{0,\ldots,n-2\}$. Then the following are equivalent:
\begin{enumerate}[label={(\alph*{$_k$})},leftmargin=*,align=left,noitemsep]
\item{There exists $K \in \mathcal{K}^n$ such that $\dim(\cc(K,B)) > k$.\label{dim-k-symmetric-body}}
\item{There exists a set $\{x_1,x_2\} \subseteq \R^n$ such that $\dim\left(\cc(\{x_1,x_2\},B)\right) > k$.\label{dim-k-symmetric-2}}
\item{The ball $B$ has a support set of dimension larger than $k$.\label{existence-dim-k-symmetric}}
\end{enumerate}
\end{theorem}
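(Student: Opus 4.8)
The plan is to establish the cyclic chain $(\mathrm b_k)\Rightarrow(\mathrm a_k)\Rightarrow(\mathrm c_k)\Rightarrow(\mathrm b_k)$, mirroring the structure of \hyperref[thm:UniqueCircumGauge]{Theorem~\ref*{thm:UniqueCircumGauge}} but exploiting the central symmetry $B=-B$ to replace the $n-k$ points needed in the gauge case by merely two antipodal points. The implication $(\mathrm b_k)\Rightarrow(\mathrm a_k)$ is immediate: a witnessing pair $\{x_1,x_2\}$ yields the convex body $K=\conv\{x_1,x_2\}$, and $\cc(\{x_1,x_2\},B)=\cc(K,B)$ because circumcenters depend only on the convex hull.

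For $(\mathrm a_k)\Rightarrow(\mathrm c_k)$ I would reuse the computation already carried out in the proof of \hyperref[thm:UniqueCircumGauge]{Theorem~\ref*{thm:UniqueCircumGauge}}. After normalizing $R(K,B)=1$ and $K\subseteq B$, \hyperref[opt]{Lemma~\ref*{opt}} supplies points $x_i\in K\cap\bd(B)$ and outer normals $u_i\in N(B,x_i)\setminus\{0\}$, $i=1,\ldots,l$, with $0\in\conv\{u_1,\ldots,u_l\}$; the same argument that led to \eqref{eq3} shows $\langle v,u_i\rangle=0$ for every $v\in\cc(K,B)$, and hence $\langle x_i-v,u_i\rangle=h(B,u_i)$. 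Thus the translate $x_1-\cc(K,B)$ is contained in the support set $\{y\in B:\langle y,u_1\rangle=h(B,u_1)\}$, and since this translate has dimension $\dim(\cc(K,B))>k$, that support set witnesses $(\mathrm c_k)$. Note that no symmetry is used here; a single support set already suffices.

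The crux is $(\mathrm c_k)\Rightarrow(\mathrm b_k)$, and this is where symmetry enters decisively. Given a support set $F\subseteq\bd(B)$ with $\dim(F)>k$, I would pick $p\in\relint(F)$, so that $\|p\|=1$, and test the antipodal pair $x_1=p$, $x_2=-p$. Then $R(\{p,-p\},B)=\tfrac12\|p-(-p)\|=1$, and a point $v$ is a circumcenter exactly when $\|p-v\|\le 1$ and $\|{-p}-v\|\le 1$; by symmetry $\|{-p}-v\|=\|p+v\|$, and the triangle inequality $\|p-v\|+\|p+v\|\ge\|2p\|=2$ forces both norms to equal $1$. Consequently $v\in\cc(\{p,-p\},B)$ precisely when $p-v$ and $p+v$ both lie on $\bd(B)$. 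For every sufficiently small $v$ in the linear space $\lin(F-p)$ one has $p\pm v\in F\subseteq\bd(B)$, since $p\in\relint(F)$; hence $\cc(\{p,-p\},B)$ contains a relative neighborhood of $0$ in $\lin(F-p)$, a space of dimension $\dim(F)>k$. This gives $\dim(\cc(\{p,-p\},B))>k$ and establishes $(\mathrm b_k)$.

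The main obstacle is precisely this last step: one must verify that the metric midpoints of $p$ and $-p$ genuinely fill out a full-dimensional slice of the support set rather than a lower-dimensional sliver. The symmetry of $B$ is essential both for the identity $\|{-p}-v\|=\|p+v\|$ and for the resulting description of the circumcenters as those $v$ for which the chord $[p-v,p+v]$ stays on the unit sphere; in the asymmetric setting this collapse from $n-k$ points to two is unavailable, which is exactly what \hyperref[Ex:Sharpness_of_new_theorem]{Example~\ref*{Ex:Sharpness_of_new_theorem}} demonstrates. As a consistency check, the case $k=0$ recovers \hyperref[thm:UniqueCircumSymmetric]{Theorem~\ref*{thm:UniqueCircumSymmetric}}, with $(\mathrm c_0)$ reading ``$B$ contains a non-degenerate boundary segment'', i.e.\ $B$ is not strictly convex.
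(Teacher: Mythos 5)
Your proposal is correct and takes essentially the same approach as the paper: the decisive implication $(\mathrm{c}_k)\Rightarrow(\mathrm{b}_k)$ is proved there in exactly your way, with your ``sufficiently small $v\in\lin(F-p)$'' neighborhood replaced by the explicit symmetrized set $A=(S-x_S)\cap(-S+x_S)$, which plays the identical role. Your direct derivation of $(\mathrm{a}_k)\Rightarrow(\mathrm{c}_k)$ merely inlines the computation that the paper obtains by invoking \hyperref[thm:UniqueCircumGauge]{Theorem~\ref*{thm:UniqueCircumGauge}} together with the (obvious) passage from its condition \hyperref[existence-dim-k-asymmetric]{(c$_k'$)} to a support set of dimension exceeding $k$.
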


\begin{proof}
\hyperref[thm:UniqueCircumGauge]{Theorem~\ref*{thm:UniqueCircumGauge}} gives \ref{dim-k-symmetric-body}$\Rightarrow$\ref{existence-dim-k-asymmetric} and the implications
\ref{dim-k-symmetric-2}$\Rightarrow$\ref{dim-k-symmetric-body} and \ref{existence-dim-k-asymmetric}$\Rightarrow$\ref{existence-dim-k-symmetric} are obvious. It suffices to verify \ref{existence-dim-k-symmetric}$\Rightarrow$\ref{dim-k-symmetric-2}.

Let $S \subseteq \bd(B)$ be a support set of $B$ with $\dim(S) > k$ and let $x_S \in \relint(S)$. Clearly, $R(\{x_S,-x_S\},B)=1$, because $\|x_S-(-x_S)\|=2\|x_S\|=2$ and $\{x_S,-x_S\}\subseteq B$.
Since $x_S \in \relint(S)$, the set $A=(S-x_S) \cap (-S+x_S)$ contains $0$ in its relative interior and satisfies $\dim(A)=\dim(S)>k$. For every $v \in A$,
$$
x_S-v \in x_S-A \subseteq x_S-(-S+x_S)=S \subseteq B
$$
and
$$
-x_S-v \in -x_S-A \subseteq -x_S-(S-x_S)=-S \subseteq B.
$$
Thus, $\{x_S,-x_S\} \subseteq B+v$ for all $v \in A$. This yields $A \subseteq \cc(\{x_S,-x_S\},B)$ and in turn $\dim\left(\cc(\{x_S,-x_S\},B)\right) \ge \dim(A) > k$.
\end{proof}
The equivalence of \ref{dim-k-symmetric-body} and \ref{existence-dim-k-symmetric} is also a consequence of \cite[Corollary~2.8 and Proposition~2.1]{VeenaSangeethaVe2018}. A localized version of that equivalence is \cite[Theorem~2.7]{VeenaSangeethaVe2018}.
\begin{remark}
The dependence of conditions \ref{nonunique-symmetric-body}, \ref{nonunique-symmetric-2}, \ref{existence-segment} from \hyperref[thm:UniqueCircumSymmetric]{Theorem~\ref*{thm:UniqueCircumSymmetric}} in the case of generalized Minkowski spaces $(\R^n,\gamma)$ with unit ball $B$ is as follows:
\begin{itemize}[leftmargin=*,align=left,noitemsep]
\item{\ref{nonunique-symmetric-2} implies \ref{nonunique-symmetric-body}. If $n=2$ then \ref{nonunique-symmetric-body} implies \ref{nonunique-symmetric-2}, because \ref{nonunique-symmetric-2} coincides with \hyperref[dim-k-asymmetric-n]{(b$_0^\prime$)}. If $n \ge 3$ then \ref{nonunique-symmetric-body} does not imply \ref{nonunique-symmetric-2}, see \hyperref[Ex:Sharpness_of_new_theorem]{Example~\ref*{Ex:Sharpness_of_new_theorem}}.}
\item{\ref{nonunique-symmetric-body} implies \ref{existence-segment}, because \hyperref[existence-dim-k-asymmetric]{(c$_0^\prime$)} implies \ref{existence-segment}. So, if some $K \in \mathcal{K}^n$ has several circumcenters then the unit ball is not strictly convex.}
\item{\ref{existence-segment} does not imply \ref{nonunique-symmetric-body}: An $n$-dimensional simplex is a striking example of a unit ball that is not strictly convex, but every $K \in \mathcal{K}^n$ has a unique circumcenter. The unique circumball of $K$ is the intersection of the $n+1$ half-spaces having the same outer normals as the facets of the simplex and whose bounding hyperplanes support $K$.}
\end{itemize}
\end{remark}

Finally, let us point out that condition \hyperref[existence-dim-k-asymmetric]{(c$_0^\prime$)} gives rise to the following criterion in generalized Minkowski planes (cf.~\cite[Lemma~1]{Vai}).

\begin{corollary}
In a generalized Minkowski plane $(\R^2,\gamma)$ every $K \in \mathcal K^2$ has a unique circumcenter if and only if the unit ball does not contain opposing parallel segments in its boundary.
\end{corollary}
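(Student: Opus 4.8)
The plan is to derive the corollary from the case $n=2$, $k=0$ of Theorem~\ref{thm:UniqueCircumGauge}, in which conditions \hyperref[dim-k-body-asymmetric]{(a$_0$)} and \hyperref[existence-dim-k-asymmetric]{(c$_0^\prime$)} are equivalent. Because $\cc(K,C)$ is always a convex body, the body $K$ has a unique circumcenter exactly when $\dim(\cc(K,C))=0$; hence ``every $K\in\mathcal K^2$ has a unique circumcenter'' is precisely the negation of \hyperref[dim-k-body-asymmetric]{(a$_0$)}. It therefore suffices to show that \hyperref[existence-dim-k-asymmetric]{(c$_0^\prime$)} is equivalent to ``$\bd(C)$ contains opposing parallel segments'' and then to negate both sides. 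I would phrase the latter geometric condition as the existence of a vector $u\neq 0$ for which both support sets $\{y\in C:\langle y,u\rangle=h(C,u)\}$ and $\{y\in C:\langle y,-u\rangle=h(C,-u)\}$ are non-degenerate: being support sets for the opposite directions $u$ and $-u$, they are parallel (both orthogonal to $u$) and opposing.

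For the implication \hyperref[existence-dim-k-asymmetric]{(c$_0^\prime$)}$\Rightarrow$``opposing parallel segments'', first note that the body $A_0$ must be a non-degenerate segment: $\dim(A_0)>0$ leaves only $\dim(A_0)\in\{1,2\}$, and $\dim(A_0)=2$ is impossible since $x_1+A_0\subseteq\bd(C)$ would then force $\bd(C)$ to contain an open set. Let $x_1,x_2,A_0,u_1,u_2$ be as in \hyperref[existence-dim-k-asymmetric]{(c$_0^\prime$)}; from $0\in\conv\{u_1,u_2\}$ with $u_1,u_2\neq 0$ one obtains $u_2=-c\,u_1$ for some $c>0$. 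The key step is the computation that for every $w\in A_0$ the inclusions $x_i+w\in C$ together with $u_i\in N(C,x_i)$ yield $\langle w,u_1\rangle\leq 0$ and $\langle w,u_2\rangle\leq 0$, which, since $u_2=-c\,u_1$, force $\langle w,u_1\rangle=0$, i.e.\ $A_0$ is orthogonal to $u_1$. Hence $\langle x_1+w,u_1\rangle=h(C,u_1)$ and $\langle x_2+w,u_2\rangle=h(C,u_2)$ for all $w\in A_0$, so $x_1+A_0$ lies in the support set for $u_1$ and $x_2+A_0$ in the support set for $-u_1$; both support sets are thus non-degenerate segments perpendicular to $u_1$, the required opposing parallel segments.

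For the converse I would start from a vector $u$ for which the two support sets above are non-degenerate segments in a common direction $d\perp u$. I would let $A_0$ be a segment parallel to $d$ whose length does not exceed that of either support set, place $x_1$ at an endpoint of the first support set and $x_2$ at an endpoint of the second, so that $x_i+A_0$ is contained in the respective support set and hence in $\bd(C)$, and set $u_1=u$, $u_2=-u$. Then $u_i\in N(C,x_i)\setminus\{0\}$, $0\in\conv\{u_1,u_2\}$, and $\dim(A_0)=1>0$, so \hyperref[existence-dim-k-asymmetric]{(c$_0^\prime$)} holds. Negating the equivalence \hyperref[dim-k-body-asymmetric]{(a$_0$)}$\Leftrightarrow$\hyperref[existence-dim-k-asymmetric]{(c$_0^\prime$)} gives the corollary.

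The hard part is the forward reformulation, specifically verifying that the two translates $x_1+A_0$ and $x_2+A_0$ genuinely sit on opposite sides of $C$ rather than coinciding. The computation $\langle w,u_1\rangle=0$ is what does this work: it identifies each translate as a subset of a support set with outer normal $u_1$, respectively $-u_1$. In particular, were $x_1=x_2$ to occur, the same computation would place a non-degenerate segment simultaneously in $\{y:\langle y,u_1\rangle=h(C,u_1)\}$ and $\{y:\langle y,-u_1\rangle=h(C,-u_1)\}$, forcing $h(C,u_1)=-h(C,-u_1)$ and hence a vanishing width of $C$ in direction $u_1$ — impossible for the full-dimensional body $C$ — so the two segments are automatically distinct and opposing.
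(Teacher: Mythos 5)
Your proposal is correct and is essentially the paper's own (implicit) argument: the paper presents the corollary as a direct specialization of condition (c$_0'$) in \hyperref[thm:UniqueCircumGauge]{Theorem~\ref*{thm:UniqueCircumGauge}} to $n=2$, $k=0$, and your write-up simply fills in the details of that translation. In particular, your key computation that $\langle w,u_1\rangle=0$ for all $w\in A_0$ (via $\langle w,u_i\rangle\le 0$ and $u_2=-cu_1$), placing $x_1+A_0$ and $x_2+A_0$ in the support sets for $u_1$ and $-u_1$ respectively, together with the positive-width argument ruling out coincident segments, is exactly the right way to make the phrase ``opposing parallel segments'' precise, and the converse construction and the negation step are both sound.
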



\providecommand{\bysame}{\leavevmode\hbox to3em{\hrulefill}\thinspace}
\providecommand{\href}[2]{#2}


\begin{thebibliography}{10}

\bibitem{AlonsoMaSp2012b}
J.~Alonso, H.~Martini, and M.~Spirova, \emph{{M}inimal enclosing discs, circumcircles, and circumcenters in normed planes ({P}art {II})}, Comput. Geom. \textbf{45} (2012), no.~7, pp.~350--369,
  \href{http://dx.doi.org/10.1016/j.comgeo.2012.02.003}{doi:~10.1016/j.comgeo.2012.02.003}.

\bibitem{AmirZi1980}
D.~Amir and Z.~Ziegler, \emph{{R}elative {C}hebyshev centers in normed linear spaces, {P}art {I}}, J. Approx. Theory \textbf{29} (1980), no.~3, pp.~235--252,
  \href{http://dx.doi.org/10.1016/0021-9045(80)90129-X}{doi:~10.1016/0021-9045(80)90129-X}.

\bibitem{BrandenbergKoe2013}
R.~Brandenberg and S.~K{\"{o}}nig, \emph{{N}o dimension-independent core-sets for containment under homothetics}, Discrete Comput. Geom. \textbf{49} (2013), no.~1, pp.~3--21,
  \href{http://dx.doi.org/10.1007/s00454-012-9462-0}{doi:~10.1007/s00454-012-9462-0}.

\bibitem{BrandenbergRo2011}
R.~Brandenberg and L.~Roth, \emph{{M}inimal containment under homothetics: {A} simple cutting plane approach}, Comput. Optim. Appl. \textbf{48} (2011), no.~2, pp.~325--340,
  \href{http://dx.doi.org/10.1007/s10589-009-9248-3}{doi:~10.1007/s10589-009-9248-3}.

\bibitem{EavesFr1982}
B.~C. Eaves and R.~M. Freund, \emph{{O}ptimal scaling of balls and polyhedra}, Math. Programming \textbf{23} (1982), no.~2, pp.~138--147,
  \href{http://dx.doi.org/10.1007/BF01583784}{doi:~10.1007/BF01583784}.

\bibitem{ElzingaHe1972a}
J.~Elzinga and D.~W. Hearn, \emph{{G}eometrical solutions for some minimax location problems}, Transportation Sci. \textbf{6} (1972), no.~4, pp.~379--394,
  \href{http://dx.doi.org/10.1287/trsc.6.4.379}{doi:~10.1287/trsc.6.4.379}.

\bibitem{Garkavi1962}
A.~L. Garkavi, \emph{{\rus{О наилучшей сети и наилучшем сечении множества в нормированном пространстве}}}, Izv. Akad. Nauk SSSR Ser. Mat.~\textbf{26} (1962), no.~1, pp.~87--106.

\bibitem{Gh}
M.~Ghomi, \emph{Optimal smoothing for convex polytopes}, B. London Math. Soc.~\textbf{36} (2004), no.~4, pp.~483--492.

\bibitem{Jahn2017}
T.~Jahn, \emph{{S}uccessive radii and ball operators in generalized {M}inkowski spaces}, Adv. Geom.~\textbf{17} (2017), no.~3, pp. 347--354,
  \href{http://dx.doi.org/10.1515/advgeom-2017-0012}{doi:~10.1515/advgeom-2017-0012}.

\bibitem{JahnMaRi2017}
T.~Jahn, H.~Martini, and C.~Richter, \emph{{B}all convex bodies in {M}inkowski spaces}, Pacific J. Math. \textbf{289} (2017), no.~2, pp.~287--316,
  \href{http://dx.doi.org/10.2140/pjm.2017.289.287}{doi:~10.2140/pjm.2017.289.287}

\bibitem{MMS}
P.~Mart\'in, H.~Martini, and M.~Spirova, \emph{Chebyshev sets and ball operators}, J. Convex Anal.
\textbf{21} (2014), no.~3, pp.~601-–618.

\bibitem{NamHo2013}
N.~M. Nam and N.~Hoang, \emph{{A} generalized {S}ylvester problem and a generalized {F}ermat--{T}orricelli problem}, J. Convex Anal. \textbf{20} (2013), no.~3, pp.~669--687.

\bibitem{ShamosHo1975}
M.~I. Shamos and D.~Hoey, \emph{{C}losest-point problems}, 16th {A}nnual {S}ymposium on {F}oundations of {C}omputer {S}cience ({B}erkeley, {C}alif.,
  1975), IEEE Comput. Soc., 1975, pp.~151--162,
  \href{http://dx.doi.org/10.1109/SFCS.1975.8}{doi:~10.1109/SFCS.1975.8}.

\bibitem{Sylvester1857}
J.~J. Sylvester, \emph{{A} question in the geometry of situation}, Q. J. Math. \textbf{1} (1857), p.~79.

\bibitem{Vai}
J.~V\"ais\"al\"a, \emph{Triangles in convex distance planes}, Beitr. Algebra Geom. (2018),
\href{https://doi.org/10.1007/s13366-018-0389-3}{doi:~10.1007/s13366-018-0389-3}

\bibitem{VeenaSangeethaVe2018}
M.~Veena Sangeetha, P.~Veeramani, \emph{{U}niform rotundity with respect to finite-dimensional subspaces}, J. Convex Anal., to appear.

\bibitem{Zalinescu2002}
C. Z{\u a}linescu, {C}onvex analysis in general vector spaces, World Scientific Publishing Co., Inc., River Edge, NJ, 2002,
  \href{http://dx.doi.org/10.1142/9789812777096}{doi:~10.1142/9789812777096}.

\bibitem{Zindler1920}
K.~Zindler, \emph{{\"{U}}ber konvexe {G}ebilde. {I}. {T}eil}, Monatsh. Math. Phys. \textbf{30} (1920), no.~1, pp.~87--102,
  \href{http://dx.doi.org/10.1007/BF01699908}{doi:~10.1007/BF01699908}.

\end{thebibliography}
\end{document}